\documentclass[11pt]{amsart}
\usepackage{amsmath,amssymb,amsthm}
\usepackage{booktabs}
\usepackage[margin=1.05in]{geometry}
\usepackage[hidelinks]{hyperref}

\newtheorem{theorem}{Theorem}
\newtheorem{corollary}{Corollary}
\newtheorem{lemma}{Lemma}
\theoremstyle{remark}
\newtheorem{remark}{Remark}

\newcommand{\Z}{\mathbb{Z}}
\newcommand{\Q}{\mathbb{Q}}
\newcommand{\R}{\mathbb{R}}
\newcommand{\ord}{\operatorname{ord}}
\newcommand{\den}{\operatorname{den}}
\newcommand{\fl}[1]{\lfloor #1\rfloor}
\newcommand{\ha}{\hat a}
\newcommand{\hb}{\hat b}
\newcommand{\hal}{\hat\alpha}
\newcommand{\hbe}{\hat\beta}
\newcommand{\one}{\mathbf 1}

\title{A new upper bound for the irrationality exponent of $\zeta(2)$}
\author{David Niedbala Giraudin}
\address{Independent researcher, Meaux, France}
\thanks{ORCID: 0009-0009-1526-1178}
\subjclass[2020]{Primary 11J82; Secondary 11Y60, 33C20, 33C60}
\keywords{irrationality exponent, $\zeta(2)$, $\pi^2$, Barnes-type integrals, Whipple's transformation, ball arithmetic}

\begin{document}

\begin{abstract}
We prove that the irrationality exponent of $\zeta(2)=\pi^2/6$ satisfies $\mu(\zeta(2))<5.0193784$.
The best published bound is $5.095412$ (Zudilin, 2014); a bound $5.0495243$ with a machine-checked
proof was made public in September 2026 by J.~Kleid. We use Zudilin's two hypergeometric
constructions at a new choice of parameters. The two families of linear forms in $1$ and
$\zeta(2)$ so obtained coincide for every sufficiently large index: this coincidence, an instance
of an identity conjectured by Zudilin, is established by comparing an arithmetic bound for the
denominators of the difference with an analytic bound for its size, without creative telescoping.
All numerical constants are certified in ball arithmetic.
\end{abstract}

\maketitle

\section{Introduction}

The irrationality exponent $\mu(\xi)$ of a real irrational number $\xi$ is the supremum of the
exponents $\mu$ for which $|\xi-p/q|<q^{-\mu}$ has infinitely many rational solutions $p/q$.
Since Ap\'ery's proof of the irrationality of $\zeta(2)$ \cite{Ap}, upper bounds for
$\mu(\zeta(2))=\mu(\pi^2)$ have been improved several times, the last published records being
$5.441243$ by Rhin and Viola \cite{RV} and $5.095412$ by Zudilin \cite{Z14}. In September 2026
J.~Kleid made public a machine-checked (Lean~4) proof of $\mu(\zeta(2))\le 5.0495243$ \cite{Kl},
obtained with Zudilin's construction at another choice of parameters.

\begin{theorem}\label{thm:main}
$\mu(\zeta(2))\le \dfrac{C_0+C_1}{C_0-\widetilde C_2}=5.0193783774690093995\ldots<5.0193784$,
with the constants of Table~\ref{tab:cert}.
\end{theorem}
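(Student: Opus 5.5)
The plan is to follow the standard scheme for irrationality measures: produce a sequence of linear forms $r_n = q_n\zeta(2) - p_n$ with integer coefficients (after clearing a denominator $D_n$) together with precise exponential asymptotics, and then apply the classical lemma (Hata / Rhin--Viola form). If
\[
\limsup \tfrac1n\log|q_n|\le C_1-\widetilde C_2,\qquad \lim \tfrac1n\log|r_n| = -(C_0+\widetilde C_2),
\]
then $\mu\le 1+\frac{C_1+\widetilde C_2}{C_0-\widetilde C_2}=\frac{C_0+C_1}{C_0-\widetilde C_2}$. The same holds under any normalization in which $C_0$ is the decay rate and $C_1$ the growth rate of the forms before arithmetic, while $\widetilde C_2$ is the denominator saving. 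The point is that the three constants of Table~\ref{tab:cert} play exactly these roles.

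\textbf{Constructing the forms.} I would take Zudilin's two hypergeometric constructions at the new parameter vector (integers $a_j,b_j$ scaled by $n$).
\begin{itemize}
\item The first is a Barnes-type or very-well-poised ${}_7F_6$-type sum, written as $\hat r_n = \hat q_n\zeta(2)-\hat p_n$. It has the good arithmetic: the denominators divide $d_{m n}^2$ times an explicit product of Chebyshev--Rhin--Viola type $\Phi_n$ over primes $p$ with $\{n/p\}$ in a set $\Omega$. This gives a saving $\widetilde C_2$ computed as an integral of $d\psi$ over $\Omega$.
\item The second, a Whipple-transformed form $r_n$, has an integral representation that is real and positive. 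The saddle point method then gives $\frac1n\log|r_n|\to -C_0$, and similarly $\frac1n\log|q_n|\to C_1$ from the dominant root of the characteristic polynomial (or from the saddle of the coefficient integral).
\end{itemize}
We need both properties on a single sequence, so the core step is to show $r_n=\hat r_n$ for large $n$, as the abstract announces.

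\textbf{The coincidence step.} This is the main obstacle. Set $\delta_n = r_n-\hat r_n = u_n\zeta(2)-v_n$. I would first show arithmetically that $D_n\delta_n$ has integer coefficients for a common denominator $D_n$, with $\frac1n\log D_n\to C_D$, using the denominator bounds known for each construction. Next I would show analytically that $\frac1n\log|\delta_n|\le -C_0'$, bounding each form by its integral, and that the coefficients satisfy $\frac1n\log|u_n|,\frac1n\log|v_n|\le C_1'$.

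The difficulty is that this alone cannot force $\delta_n=0$, because $\zeta(2)$ is not known to be badly approximable. My first idea is to exploit that both forms satisfy linear recurrences, or that $u_n,v_n$ are themselves given by explicit finite sums. In that case I would show the rational \emph{coefficients} agree: $D_n u_n$ is an integer, and if I can bound $|u_n|$ by $e^{-cn}$ with $c>C_D$, then $u_n=0$. I would get that bound by showing the leading asymptotics of $q_n$ and $\hat q_n$ cancel, for instance via a contour representation of the difference with a smaller saddle value. Then $\delta_n=-v_n$ is rational with denominator $D_n$ and has size $<e^{-C_0 n}$; if $C_0>C_D$, this forces $v_n=0$. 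Should direct coefficient comparison fail, I would fall back on proving Zudilin's identity via the Whipple/Bailey transformation at the level of the integrands, and use the size argument only to kill a possible polynomial discrepancy.

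\textbf{Certification.} I would evaluate $C_0$, $C_1$ and $\widetilde C_2$ in ball arithmetic. That means roots of the relevant polynomial equations for the saddle points, and the $\psi$-integral for the arithmetic saving. Finally I would verify the interval for $\frac{C_0+C_1}{C_0-\widetilde C_2}$ lies below $5.0193784$, and also check the strict inequalities (such as $C_0>\widetilde C_2$ and the coincidence margin) needed above.
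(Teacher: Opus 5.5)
The outer frame matches the paper: Hata's lemma, a coincidence proved by playing a denominator bound against a size bound, and certification. Your second half of the coincidence step is exactly Theorem~\ref{thm:coinc}: once the $\zeta(2)$-coefficients agree, $\Delta_n$ is rational with denominator $\le e^{(K+o(1))n}$ and size $\le e^{-(\min\{C_0,K+1\}+o(1))n}$, hence zero.

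\textbf{The gap: proving $u_n=0$.} Your primary plan cannot work. $u_n$ is a difference of integers, and $q_n$ grows like $e^{(C_1+o(1))n}$ with $C_1\approx467.7$. No comparison of leading asymptotics can yield $|u_n|<1$. A contour representation of the difference with a negative saddle value would itself be the exact identity you are trying to prove, and you give no source for it.

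Your fallback aims Whipple at the wrong object. An identity between the full forms $r_n$ and $\hat r_n$ is Zudilin's conjecture \cite[(19)]{Z14}, which is not available. The recurrence route is Zudilin's creative telescoping, which the paper deliberately avoids and which would be very heavy at parameters of size $292n$.

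The missing idea is that an exact identity is needed only for the coefficients of $\zeta(2)$, and there it is classical. Under $\beta_1=0$, $\beta_2=\alpha_4-\alpha_1$, $\beta_3=\alpha_4-\alpha_2$ and the matching choice \eqref{eq:hatparam}, the finite sums $q_n$ and $\hat q_n$ are, up to sign, the terminating ${}_4F_3(1)$ and ${}_5F_4(1)$ on the two sides of Whipple's transformation \cite[Remark~5]{Z14}. Hence $q_n+\hat q_n=0$ for every $n$ (Lemma~\ref{lem:whipple}), and your size-versus-denominator argument then finishes.

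\textbf{The decisive inequality is left as a hypothesis.} You write ``if $C_0>C_D$'', but this is not automatic. The published bound $D_{292n}D_{246n}\hat p_n\in\Z$ by itself gives a denominator exponent of about $538$, far above $C_0\approx325.4$. The paper reaches $K\approx294.04$ through a prime-by-prime analysis of $\hat p_n$ for $p>n$. This uses Lemma~\ref{lem:refined}, comparing $\ord_pB_k$ with $\ord_pA_k$, and the indicator $\chi$, which records whether $p$ can divide a denominator of $\Lambda_s$.

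Also, $|\hat r_n|$ cannot be assumed to decay at rate $C_0$ before the coincidence is known. An independent bound is required, which the paper supplies in Lemma~\ref{lem:decay} on a shifted contour ($\le-299.15<-K-1$).

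\textbf{Smaller inaccuracies.}
\begin{itemize}
\item The roles of the two constructions are misassigned. The first construction already supplies $C_0$, $C_1$ and its own arithmetic $\varphi$. $\widetilde C_2$ uses $\tilde\varphi=\max\{\varphi,\hat\varphi\}$, i.e.\ the arithmetic of both constructions at once, which is precisely why the coincidence is needed.
\item Your first display has the sign of $\widetilde C_2$ reversed: the growth rate should be $C_1+\widetilde C_2$ and the decay rate $C_0-\widetilde C_2$. Your final formula is nonetheless correct.
\end{itemize}
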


\begin{corollary}
$\mu(\pi^2)<5.0193784$, and $\mu(\pi\sqrt d)<10.0387568$ for every positive rational $d$.
\end{corollary}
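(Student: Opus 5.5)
The statement to prove is the Corollary, which follows from Theorem~\ref{thm:main}. The plan is to derive it from the Theorem using only two elementary facts about irrationality exponents. Both $\pi^2$ and $\pi\sqrt d$ are irrational, so $\mu$ is well defined for them.

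\emph{Step 1: invariance under rational maps.} If $\xi$ is irrational and $r\in\Q^\times$, $s\in\Q$, then $\mu(r\xi+s)=\mu(\xi)$. Write $r=u/v$ and $s=w/v$ with integers $u,v,w$ and $uv\neq 0$. Suppose $|r\xi+s-p/q|<q^{-\mu}$. Then
\[
\Bigl|\xi-\frac{vp-wq}{uq}\Bigr|<\frac{|v|}{|u|}\,q^{-\mu}.
\]
The denominator here is $uq\asymp q$, so the right-hand side is at most $(|u|q)^{-\mu+\varepsilon}$ for all large $q$. Also, infinitely many distinct fractions $p/q$ give infinitely many distinct fractions $(vp-wq)/(uq)$. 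Hence $\mu(\xi)\ge\mu(r\xi+s)-\varepsilon$ for every $\varepsilon>0$. Applying the same argument to the inverse map gives equality. With $r=6$, $s=0$, $\xi=\zeta(2)$ we get
\[
\mu(\pi^2)=\mu(6\zeta(2))=\mu(\zeta(2))<5.0193784
\]
by Theorem~\ref{thm:main}.

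\emph{Step 2: the square root.} Let $\eta=\pi\sqrt d$. Then $\eta^2=d\pi^2$, and by Step~1, $\mu(\eta^2)=\mu(\pi^2)$. The claim is $\mu(\eta)\le 2\mu(\eta^2)$. Suppose $|\eta-p/q|<q^{-\mu}$ with $q$ large. Then $p/q$ is bounded, say $|p/q|\le|\eta|+1$. Therefore
\[
\Bigl|\eta^2-\frac{p^2}{q^2}\Bigr|=\Bigl|\eta-\frac pq\Bigr|\,\Bigl|\eta+\frac pq\Bigr|\le(2|\eta|+1)\,q^{-\mu}=(2|\eta|+1)\,(q^2)^{-\mu/2}.
\]
Infinitely many distinct $p/q$ give infinitely many distinct squares $p^2/q^2$, because each square has at most two preimages. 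The reduced denominator of $p^2/q^2$ is at most $q^2$. If the reduced denominator $Q$ is much smaller than $q^2$, the approximation is only better relative to $Q$, as long as $\mu/2>0$. So
\[
\Bigl|\eta^2-\frac{P}{Q}\Bigr|\le C\,Q^{-\mu/2}
\]
holds for infinitely many reduced fractions $P/Q$, which gives $\mu(\eta^2)\ge\mu/2-\varepsilon$. Taking the supremum over admissible $\mu$ yields
\[
\mu(\pi\sqrt d)\le 2\mu(\pi^2)<2\cdot 5.0193784=10.0387568.
\]

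\emph{Main obstacle.} The main point needing care is the denominator reduction in Step~2. If $p^2/q^2$ reduces to $P/Q$ with $Q\le q^2$, then $q^{-\mu}=(q^2)^{-\mu/2}\le Q^{-\mu/2}$. The inequality therefore points the right way, and the argument goes through. The constant factor $2|\eta|+1$ is absorbed by the $\varepsilon$-loss in the exponent, since $Q\to\infty$ along the sequence (because $\eta^2$ is irrational).
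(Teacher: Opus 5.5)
Your proof is correct and follows the paper's route. You use invariance of $\mu$ under $\xi\mapsto r\xi+s$ with $r\in\Q^\times$, $s\in\Q$, which gives $\mu(\pi^2)=\mu(\zeta(2))$ and $\mu(d\pi^2)=\mu(\pi^2)$, then square an approximation $p/q$ of $\pi\sqrt d$ to get $|d\pi^2-p^2/q^2|\ll(q^2)^{-\mu/2}$ and hence $\mu(\pi\sqrt d)\le2\mu(\pi^2)$, supplying the routine details (denominator reduction, distinctness, absorbing constants into $\varepsilon$) that the paper leaves implicit.
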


Indeed $\mu(\pi^2)=\mu(\zeta(2))$, and if $|\pi\sqrt d-p/q|<q^{-\mu}$ then $|d\pi^2-p^2/q^2|\ll(q^2)^{-\mu/2}$, so that $\mu(\pi\sqrt d)\le2\mu(\pi^2)$ (cf.\ \cite[\S1]{Z14}).

The construction is the one of \cite{Z14}: a Barnes-type integral $r_n=q_n\zeta(2)-p_n$ (the
\emph{first construction}) and a second integral $\hat r_n=\hat q_n\zeta(2)-\hat p_n$ built on a
very-well-poised pattern (the \emph{second construction}). Each one comes with its own $p$-adic
information on its coefficients; when the two linear forms coincide, both pieces of
information apply to the same numbers. Zudilin proved the coincidence at his parameters by
creative telescoping and conjectured it in general \cite[(19)]{Z14}, \cite{Z17}. At the
parameters used here we prove it for all large $n$ by a different argument
(Theorem~\ref{thm:coinc}): by Whipple's transformation $q_n+\hat q_n=0$, so that
$\Delta_n=r_n+\hat r_n$ is a rational number; its denominator is at most $e^{(K+o(1))n}$, while
$|\Delta_n|\le e^{(-K-1+o(1))n}$; hence $\Delta_n=0$.

\section{Zudilin's two constructions}\label{sec:constr}

We recall the facts we need from \cite{Z14}, keeping its notation. For a prime $p$, $\ord_p$
is the $p$-adic valuation; $D_m=\operatorname{lcm}(1,\dots,m)$; $\psi=\Gamma'/\Gamma$;
$(x)_m=x(x+1)\cdots(x+m-1)$.

\subsection{First construction}
Let $\alpha_1,\dots,\alpha_4,\beta_1,\dots,\beta_4$ be integers with
$\beta_1,\beta_2,\beta_3<\alpha_1,\dots,\alpha_4<\beta_4$ and
$\alpha_1+\dots+\alpha_4>\beta_1+\dots+\beta_4$. For $n\ge0$ put $a_j=\alpha_jn+1$
($1\le j\le4$), $b_j=\beta_jn+1$ ($1\le j\le3$), $b_4=\beta_4n+2$, $d=\sum_j a_j-\sum_j b_j$,
\[
R(t)=\prod_{j=1}^{3}\frac{(t+b_j)_{a_j-b_j}}{(a_j-b_j)!}\cdot\frac{(b_4-a_4-1)!}{(t+a_4)_{b_4-a_4}},
\qquad
r_n=\frac{(-1)^d}{2\pi i}\int_{C-i\infty}^{C+i\infty}\Bigl(\frac{\pi}{\sin\pi t}\Bigr)^2R(t)\,dt .
\]
By \cite[Prop.~1]{Z14}, $r_n=q_n\zeta(2)-p_n$ with $q_n\in\Z$; explicitly
$q_n=(-1)^d\sum_{k=a_4^*}^{b_4-1}C_k$ with
$C_k=(-1)^{d+b_4+k}\binom{k-b_1}{k-a_1}\binom{k-b_2}{k-a_2}\binom{k-b_3}{k-a_3}\binom{b_4-a_4-1}{k-a_4}$,
where $a_1^*\le\dots\le a_4^*$ is the ordering of the $a_j$ (and $\alpha_1^*\le\dots\le\alpha_4^*$ that of the $\alpha_j$); in the definition of $r_n$, $-a_2^*<C<1-b_2^*$, where $b_1^*\le b_2^*\le b_3^*$ orders $b_1,b_2,b_3$ \cite[(10)]{Z14}.
Let $\tau_1$ be the real zero and $\tau_0$, $\bar\tau_0$ the non-real zeros of the cubic
polynomial $\prod_j(\tau-\alpha_j)-\prod_j(\tau-\beta_j)$, and
\[
f_0(\tau)=\sum_{j=1}^4\bigl(\alpha_j\log(\tau-\alpha_j)-\beta_j\log(\tau-\beta_j)\bigr)
-\sum_{j=1}^3(\alpha_j-\beta_j)\log(\alpha_j-\beta_j)+(\beta_4-\alpha_4)\log(\beta_4-\alpha_4).
\]
By \cite[Lemma~6]{Z14},
$\limsup_n\frac1n\log|r_n|=-C_0$ and $\lim_n\frac1n\log|q_n|=C_1$, where
$C_0=-\operatorname{Re}f_0(\tau_0)$ and $C_1=\operatorname{Re}f_0(\tau_1)$.
Let $\gamma_1\ge\gamma_2$ be the two integers
$\max\{\alpha_1-\beta_1,\alpha_2-\beta_2,\alpha_3-\beta_3,\beta_4-\alpha_2^*\}$ and
$\max\{\sum\alpha_j-\sum\beta_j,\beta_4-\alpha_2^*\}$, and $\gamma_0=\max_{j,k}|\alpha_j-\beta_k|$.
By \cite[Lemma~7]{Z14},
\begin{equation}\label{eq:lemma7}
\Phi_n^{-1}q_n\in\Z,\qquad \Phi_n^{-1}D_{\gamma_1n}D_{\gamma_2n}\,p_n\in\Z,\qquad
\Phi_n=\prod_{\sqrt{\gamma_0n}<p\le\gamma_2n}p^{\varphi(n/p)},
\end{equation}
with the $1$-periodic integer-valued function
$\varphi(x)=\max_{\sigma}\bigl(\fl{(\beta_4-\alpha_4)x}-\fl{(\beta_4-\alpha'_4)x}
-\sum_{j=1}^3(\fl{(\alpha_j-\beta_j)x}-\fl{(\alpha'_j-\beta_j)x})\bigr)$, the maximum
being over the permutations $\alpha'=\sigma\alpha$ of $(\alpha_1,\dots,\alpha_4)$.

\subsection{Second construction}
Let $\hal_0,\dots,\hbe_3$ be integers with
$\tfrac12\hbe_0,\hbe_1<\tfrac12\hal_0,\hal_1,\hal_2,\hal_3<\hbe_2,\hbe_3$ and
$\sum\hal_j=\sum\hbe_j$. Put $\ha_0=\hal_0n+2$, $\ha_j=\hal_jn+1$ ($j=1,2,3$),
$\hb_0=\hbe_0n+2$, $\hb_1=\hbe_1n+1$, $\hb_j=\hbe_jn+2$ ($j=2,3$), $\hat d=\hb_2+\hb_3$,
\[
\hat R(t)=\frac{(2t+\hb_0)_{\ha_0-\hb_0}}{(\ha_0-\hb_0)!}\cdot\frac{(t+\hb_1)_{\ha_1-\hb_1}}{(\ha_1-\hb_1)!}
\cdot\prod_{j=2}^3\frac{(\hb_j-\ha_j-1)!}{(t+\ha_j)_{\hb_j-\ha_j}},
\qquad
\hat r_n=\frac{(-1)^{\hat d}}{2\pi i}\int_{C/2-i\infty}^{C/2+i\infty}\frac{\pi}{\sin2\pi t}\hat R(t)\,dt,
\]
with $-\ha_0^*<C<1-\hb_0$, where $\ha_1^*\le\ha_2^*\le\ha_3^*$ orders $\ha_1,\ha_2,\ha_3$,
$\hb_2^*\le\hb_3^*$ orders $\hb_2,\hb_3$, and $\ha_0^*=\min\{\ha_0,2\ha_2^*\}$; the orderings $\hal_j^*$, $\hbe_j^*$ of the $\hal_j$, $\hbe_j$ are defined in the same way. Writing
$\hat R(t)=\sum_{k=\ha_3^*}^{\hb_2^*-1}A_k(t+k)^{-2}+\sum_{k=\ha_2^*}^{\hb_3^*-1}B_k(t+k)^{-1}$,
the proof of \cite[Prop.~3]{Z14} gives $\hat r_n=\hat q_n\zeta(2)-\hat p_n$ with
\begin{equation}\label{eq:phat}
\hat q_n=(-1)^{\hat d}\sum_kA_k\in\Z,\qquad
\hat p_n=(-1)^{\hat d}\Bigl(\sum_{k}2A_k\Lambda_2(2k-\ha_0^*)+\sum_{k}B_k\Lambda_1(2k-\ha_0^*)\Bigr),
\end{equation}
where $\Lambda_s(L)=\sum_{\ell=1}^{L}(-1)^{\ell-1}\ell^{-s}$, and
$D_{\hat c_1n}D_{\hat c_2n}\hat p_n\in\Z$ for explicit integers $\hat c_1\ge\hat c_2$
\cite[(25)]{Z14}. For every prime $p$ and every $k$ \cite[(24)]{Z14},
\begin{equation}\label{eq:24}
\ord_pA_k\ge E_k(p),\qquad 1+\ord_pB_k\ge E_k(p),
\end{equation}
where $E_k(p)$ is the following sum of integer parts:
\[
\Bigl\lfloor\tfrac{2k-\hb_0}{p}\Bigr\rfloor-\Bigl\lfloor\tfrac{2k-\ha_0}{p}\Bigr\rfloor-\Bigl\lfloor\tfrac{\ha_0-\hb_0}{p}\Bigr\rfloor
+\Bigl\lfloor\tfrac{k-\hb_1}{p}\Bigr\rfloor-\Bigl\lfloor\tfrac{k-\ha_1}{p}\Bigr\rfloor-\Bigl\lfloor\tfrac{\ha_1-\hb_1}{p}\Bigr\rfloor
+\sum_{j=2}^3\Bigl(\Bigl\lfloor\tfrac{\hb_j-\ha_j-1}{p}\Bigr\rfloor-\Bigl\lfloor\tfrac{k-\ha_j}{p}\Bigr\rfloor-\Bigl\lfloor\tfrac{\hb_j-1-k}{p}\Bigr\rfloor\Bigr).
\]
By \cite[Lemma~8]{Z14}, $\ord_p\hat q_n\ge\hat\varphi(n/p)$ and
$\ord_p\hat p_n\ge-2+\hat\varphi(n/p)$ for the primes $p>\sqrt{\hat c_1n}$, where
$\hat\varphi(x)=\min_{0\le y<1}E(x,y)$ and
\[
\begin{aligned}
E(x,y)={}&\fl{2y-\hbe_0x}-\fl{2y-\hal_0x}-\fl{(\hal_0-\hbe_0)x}+\fl{y-\hbe_1x}-\fl{y-\hal_1x}-\fl{(\hal_1-\hbe_1)x}\\
&+\textstyle\sum_{j=2}^3\bigl(\fl{(\hbe_j-\hal_j)x}-\fl{\hbe_jx-y}-\fl{y-\hal_jx}\bigr).
\end{aligned}
\]

\subsection{The parameters}\label{sec:param}
Throughout, except in Section~\ref{sec:rem},
\begin{equation}\label{eq:param}
(\alpha_1,\alpha_2,\alpha_3,\alpha_4;\,\beta_1,\beta_2,\beta_3,\beta_4)=(146,123,100,169;\,0,23,46,292),
\end{equation}
so that $\beta_1=0$, $\beta_2=\alpha_4-\alpha_1$, $\beta_3=\alpha_4-\alpha_2$, and the second
construction is taken with
\begin{equation}\label{eq:hatparam}
(\hal_0;\hal_1,\hal_2,\hal_3\,|\,\hbe_0;\hbe_1,\hbe_2,\hbe_3)=(223;54,77,100\,|\,31;0,200,223),
\end{equation}
i.e.\ $\hal_0=\hbe_3=\beta_4+\alpha_3-\alpha_4$, $\hbe_0=2\alpha_3-\alpha_4$,
$\hal_1=\alpha_2+\alpha_3-\alpha_4$, $\hal_2=\alpha_1+\alpha_3-\alpha_4$, $\hal_3=\alpha_3$,
$\hbe_2=\alpha_1+\alpha_2+\alpha_3-\alpha_4$: this is the correspondence of \cite[\S\S5--6]{Z14}
(identity (19) there, after the shift $t\mapsto t-(\alpha_4-\alpha_3)n$). All the conditions above
hold, and $(\gamma_1,\gamma_2)=(177,169)$, $\gamma_0=192$, $(\hat c_1,\hat c_2)=(292,246)$.

\section{Four lemmas}

\begin{lemma}[Whipple]\label{lem:whipple}
For every $n\ge0$, $q_n+\hat q_n=0$.
\end{lemma}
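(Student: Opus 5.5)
We prove Lemma~\ref{lem:whipple} by writing both $q_n$ and $\hat q_n$ as terminating hypergeometric sums and recognising the two sides of Whipple's transformation. This transformation expresses a terminating very-well-poised ${}_7F_6(1)$ as a multiple of a terminating balanced ${}_4F_3(1)$.

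\emph{The first construction.} By the explicit formula after \cite[Prop.~1]{Z14}, $q_n=(-1)^d\sum_kC_k$. Each $C_k$ is a product of four binomial coefficients. Shifting $k=a_4^*+m$ (here $a_4^*=a_1=146n+1$) turns $\sum_k C_k$ into a terminating ${}_4F_3(1)$ in $m$, up to an explicit prefactor of factorials. We will check that it is balanced (Saalsch\"utzian) by using the relations $\beta_1=0$, $\beta_2=\alpha_4-\alpha_1$, $\beta_3=\alpha_4-\alpha_2$, which are exactly what the choice \eqref{eq:param} was designed for.

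\emph{The second construction.} We compute $A_k$ as the coefficient of $(t+k)^{-2}$ in $\hat R(t)$. Double poles occur only where both factors $(t+\ha_j)_{\hb_j-\ha_j}$, $j=2,3$, vanish. Hence
\[
A_k=\hat R(t)(t+k)^2\big|_{t=-k},
\]
a product of factorial ratios. Shifting $k=\ha_3^*+m$, the term $(2t+\hb_0)_{\ha_0-\hb_0}$ evaluated at $t=-k$ produces the very-well-poised factor $(a+2m)/a$ after standard manipulations. So $\sum_kA_k$ becomes a terminating very-well-poised ${}_7F_6(1)$, again with an explicit prefactor. The correspondence \eqref{eq:hatparam}, taken from \cite[\S\S5--6]{Z14} with the shift $t\mapsto t-(\alpha_4-\alpha_3)n$, is precisely the parameter match between Whipple's ${}_7F_6$ and ${}_4F_3$. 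We verify this match directly: each of the upper and lower parameters of the two series is an explicit linear function of $n$.

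\emph{Comparing the two sides.} We apply Whipple's formula in the form
\[
{}_7F_6\bigl(a;1+\tfrac a2,b,c,d,e,-N;\dots\bigr)=\frac{(1+a)_N(1+a-d-e)_N}{(1+a-d)_N(1+a-e)_N}\,
{}_4F_3\bigl(-N,d,e,1+a-b-c;\dots\bigr),
\]
and then check that the two prefactors, together with the signs $(-1)^d$ and $(-1)^{\hat d}$ and the factor $2$ from $\sin2\pi t$, combine to give $\hat q_n=-q_n$. Alternatively, we avoid tracking prefactors term by term: it suffices to show that the two sides agree up to sign for the specific integer parameters, and then to fix the sign by comparing the leading (boundary) term, for instance the $m=0$ term.

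\emph{Main obstacle.} The hardest part is the bookkeeping. Whipple's formula needs one numerator parameter equal to a negative integer $-N$, and some of our parameters are negative integers that must not cause spurious termination or division by zero. The extreme values $k=\ha_3^*$ and $k=\hb_2^*-1$ must correspond exactly to $m=0$ and $m=N$ on both sides. The prefactor ratio $(1+a)_N/\dots$ must be evaluated with integer, possibly nonpositive, entries. If some Pochhammer symbols in the denominators vanish, we will first apply the formula with a generic perturbation $n\epsilon$ of one parameter. We then pass to the limit, using that both sides are rational functions of $\epsilon$ that are regular at $\epsilon=0$.
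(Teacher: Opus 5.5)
Your overall strategy (write $q_n$ and $\hat q_n$ as terminating hypergeometric sums and match them by a Whipple transformation) is the paper's. But the series you expect are not the ones that occur, so the formula you plan to apply fits neither side.

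\textbf{The first side.} $a_4^*$ is the largest of the $a_j$, so $a_4^*=a_4=169n+1$, not $a_1$. With $k=169n+1+m$ the term ratio of $C_k$ is $\prod_{j=1}^4(k+1-b_j)/(k+1-a_j)$. This gives
$q_n\propto{}_4F_3(169n+1,\,146n+1,\,123n+1,\,-123n;\;23n+1,\,46n+1,\,69n+1;\,1)$.
This series is not Saalsch\"utzian. In general the lower parameters (including the $1$ from $m!$) exceed the upper ones by $\sum_jb_j-\sum_ja_j=1-(\sum_j\alpha_j-\sum_j\beta_j)n$, here $1-177n$. Balance would require this excess to be $2$, which is impossible since the first construction imposes $\sum\alpha_j>\sum\beta_j$. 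What the relations $\beta_1=0$, $\beta_2=\alpha_4-\alpha_1$, $\beta_3=\alpha_4-\alpha_2$ actually produce is two well-poised pairs: $(146n+1)+(23n+1)=(123n+1)+(46n+1)=1+(169n+1)$.

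\textbf{The second side.} At $t=-k$, the factor $(2t+\hb_0)_{\ha_0-\hb_0}$ is a product of $192n$ linear factors $i-2k$. Its contribution to the term ratio is $\frac{(2k+2-\hb_0)(2k+1-\hb_0)}{(2k+2-\ha_0)(2k+1-\ha_0)}$: two numerator and two denominator parameters differing by $\frac12$, not a single factor $(a+2m)/a$. Moreover $A_k=0$ whenever $2k<\ha_0$, so the sum really starts at $\lceil\ha_0/2\rceil=\lceil 111.5n\rceil+1$, not at $\ha_3^*=100n+1$. For even $n$ one gets
$\hat q_n\propto{}_5F_4(96n+\frac12,\,96n+1,\,\frac{223n}2+1,\,-\frac{177n}2,\,-\frac{223n}2;\;\frac12,\,\frac{115n}2+1,\,\frac{69n}2+1,\,\frac{23n}2+1;\,1)$.
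This series is balanced but not even well-poised, let alone a very-well-poised ${}_7F_6$.

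So the steps ``check that it is balanced'' and ``$\sum_kA_k$ becomes a very-well-poised ${}_7F_6$'' both fail. The classical ${}_7F_6$--${}_4F_3$ formula, whose ${}_4F_3$ side is always balanced, cannot be applied as you state it. What is missing is the Whipple transformation linking exactly these two shapes. One side is a terminating ${}_4F_3(1)$ with two well-poised pairs; the other is a terminating balanced ${}_5F_4(1)$ containing the quadratic pair $\frac A2,\frac{A+1}2$ over $\frac12$ (here $A=192n+1$). This is what the paper invokes, namely \cite[(2.4.2.3)]{Sl} with the identification of \cite[Remark~5]{Z14}.

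\textbf{The sign.} Your fallback of fixing the sign by comparing $m=0$ terms is not valid, because the identity does not hold term by term. Also, $q_n$ is an alternating sum (each $C_k$ carries $(-1)^k$), whereas all nonzero $A_k$ have the same sign $(-1)^n$. The sign must come from the prefactor of the transformation; the paper additionally confirms $q_n=-\hat q_n$ in exact arithmetic for $n\le10$.
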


\begin{proof}
Under the conditions $\beta_1=0$, $\beta_2=\alpha_4-\alpha_1$, $\beta_3=\alpha_4-\alpha_2$, the
finite sums $q_n$ and $\hat q_n$ are, up to sign, the terminating ${}_4F_3(1)$ and
${}_5F_4(1)$ series on the two sides of Whipple's transformation
\cite[(2.4.2.3)]{Sl}; this identification is \cite[Remark~5]{Z14}. With the normalisations of
Section~\ref{sec:constr} the identity reads $q_n=-\hat q_n$. (This sign is confirmed in exact
arithmetic for $0\le n\le10$; see the ancillary file \texttt{verify\_\allowbreak zeta2\_\allowbreak exact.py}.)
\end{proof}

\begin{lemma}\label{lem:refined}
Let $\ha_3^*\le k<\hb_2^*$ with $A_k\ne0$, and let $\mathcal N_k$ be the set of nonzero integers
$i-2k$ ($\hb_0\le i<\ha_0$), $i-k$ ($\hb_1\le i<\ha_1$), $i-k$ ($\ha_2\le i<\hb_2$, $i\ne k$),
$i-k$ ($\ha_3\le i<\hb_3$, $i\ne k$). Let $p$ be an odd prime with $p^2>\max_{c\in\mathcal N_k}|c|$,
and $\delta_k(p)=1$ if $p$ divides an element of $\mathcal N_k$, $\delta_k(p)=0$ otherwise. Then
\[
\ord_pB_k\ge\ord_pA_k-\delta_k(p).
\]
\end{lemma}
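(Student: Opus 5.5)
The key observation is that $A_k$ and $B_k$ are the leading and subleading Laurent coefficients of $\hat R$ at its double pole $t=-k$. So I would write $A_k=\bigl((t+k)^2\hat R(t)\bigr)|_{t=-k}$ and $B_k=\frac{d}{dt}\bigl((t+k)^2\hat R(t)\bigr)|_{t=-k}$. Let $G(t)=(t+k)^2\hat R(t)$. The ratio $G'/G$ at $t=-k$ is then a logarithmic derivative, a sum of simple fractions. The double pole occurs only for $\ha_3^*\le k<\hb_2^*$, where $-k$ lies in both denominator ranges $[\ha_2,\hb_2)$ and $[\ha_3,\hb_3)$.

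Explicitly, $G(t)$ is a constant multiple of the following product:
\begin{itemize}
\item the numerator factors $(2t+i)$ for $\hb_0\le i<\ha_0$;
\item the numerator factors $(t+i)$ for $\hb_1\le i<\ha_1$;
\item the reciprocals $(t+i)^{-1}$ for $\ha_j\le i<\hb_j$ with $i\ne k$, for $j=2,3$.
\end{itemize}
If $A_k\neq0$, no numerator factor vanishes at $t=-k$, and
\[
\frac{B_k}{A_k}=\sum_{\hb_0\le i<\ha_0}\frac{2}{i-2k}+\sum_{\hb_1\le i<\ha_1}\frac{1}{i-k}-\sum_{j=2}^3\sum_{\substack{\ha_j\le i<\hb_j\\ i\ne k}}\frac{1}{i-k}.
\]
Every denominator is an element of $\mathcal N_k$, so these are precisely the sets in the statement. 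Since $A_k\ne0$, every term is finite.

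Next I bound the $p$-adic valuation of this sum. Each $c\in\mathcal N_k$ is nonzero with $|c|<p^2$, so $\ord_p c\le1$. Because $p$ is odd, the factor $2$ in the numerator has $\ord_p2=0$. Therefore each term has valuation $\ge-1$, and $\ge0$ unless $p\mid c$.

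\begin{itemize}
\item If $p$ divides no element of $\mathcal N_k$ ($\delta_k(p)=0$), every term is a $p$-adic unit or better, so $\ord_p(B_k/A_k)\ge0$.
\item Otherwise every term still has valuation $\ge-1$, so $\ord_p(B_k/A_k)\ge-1=-\delta_k(p)$.
\end{itemize}
Multiplying through by $A_k$ gives $\ord_pB_k\ge\ord_pA_k-\delta_k(p)$.

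The only step needing care is justifying the formula for $B_k/A_k$. One must check the following points.
\begin{itemize}
\item The pole of $\hat R$ at $t=-k$ has order exactly $2$, with $A_k$ its leading coefficient, matching the expansion $\hat R(t)=\sum A_k(t+k)^{-2}+\sum B_k(t+k)^{-1}$. Because the numerator factors could cancel a denominator factor, I would use the hypothesis $A_k\neq0$ to exclude this.
\item No numerator factor vanishes at $-k$. This means $i\ne2k$ and $i\ne k$ in the numerator ranges, which again follows from $A_k=G(-k)\ne0$.
\item The normalising factorials are constants and drop out of the logarithmic derivative.
\end{itemize}
Then $B_k=G'(-k)=G(-k)\cdot(G'/G)(-k)$, which is the formula above. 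Everything after that is the elementary valuation count.
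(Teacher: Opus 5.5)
Your proposal is correct and is essentially the paper's proof: you set $G(t)=(t+k)^2\hat R(t)$, identify $A_k=G(-k)$ and $B_k=G'(-k)$, use $A_k\ne0$ to write $B_k/A_k$ as the logarithmic derivative $\frac{G'}{G}(-k)$ with all denominators in $\mathcal N_k$, and then use $p^2\nmid c$ and $p$ odd to get $\ord_p(B_k/A_k)\ge-\delta_k(p)$. One small slip: it is $k$, not $-k$, that lies in both ranges $[\ha_2,\hb_2)$ and $[\ha_3,\hb_3)$, and this is exactly what the hypothesis $\ha_3^*\le k<\hb_2^*$ guarantees.
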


\begin{proof}
Since $\ha_3^*\le k<\hb_2^*$, the factor $t+k$ occurs once in each of the two products of the
denominator of $\hat R$, and $G(t)=\hat R(t)(t+k)^2$ is a product of linear factors with
$G(-k)=A_k$ and $B_k=G'(-k)$. As $A_k\ne0$, no factor of the numerator vanishes at $t=-k$, and
\[
\frac{B_k}{A_k}=\frac{G'}{G}(-k)=\sum_{\hb_0\le i<\ha_0}\frac{2}{i-2k}+\sum_{\hb_1\le i<\ha_1}\frac1{i-k}
-\sum_{\substack{\ha_2\le i<\hb_2\\ i\ne k}}\frac1{i-k}-\sum_{\substack{\ha_3\le i<\hb_3\\ i\ne k}}\frac1{i-k}.
\]
Every denominator lies in $\mathcal N_k$; by the assumption on $p$ none is divisible by $p^2$,
and none is divisible by $p$ unless $\delta_k(p)=1$. Hence $\ord_p(B_k/A_k)\ge-\delta_k(p)$.
\end{proof}

For $x\in(0,1]$ and $y\in\R$ let $\chi(x,y)=\one[2y-\hal_0^*x\ge1]$ with
$\hal_0^*=\min\{\hal_0,2\hal_2^*\}$, and let $\delta(x,y)=1$ if $\hbe_0x\le2y<\hal_0x$, or
$\hbe_1x\le y<\hal_1x$, or one of the intervals $[\hbe_0x-2y,\hal_0x-2y)$,
$[\hbe_1x-y,\hal_1x-y)$, $[\hal_2x-y,\hbe_2x-y)$, $[\hal_3x-y,\hbe_3x-y)$ contains a nonzero
integer; $\delta(x,y)=0$ otherwise. With $J_1=[\hal_3^*x,\hbe_2^*x)$ and
$J_2=[\hal_2^*x,\hbe_3^*x)$ define
\[
m(x)=\min\Bigl\{\inf_{y\in J_1}\bigl(E-2\chi\bigr),\ \inf_{y\in J_2\setminus J_1}\bigl(E-1-\chi\bigr),\
\inf_{y\in J_1}\bigl(E-\delta-\chi\bigr)\Bigr\}(x,y),\qquad \upsilon(x)=\max\{0,-m(x)\}.
\]
Put $e_1(x)=\one[x\ge1/\gamma_1]+\one[x\ge1/\gamma_2]$,
$e_2(x)=\one[x\ge1/\hat c_1]+\one[x\ge1/\hat c_2]$,
$u_1(x)=e_1(x)-\varphi(x)\one[x\ge1/\gamma_2]$, $u_2(x)=\min\{e_2(x),\upsilon(x)\}$, and
\begin{equation}\label{eq:K}
K=\int_0^1\max\{u_1(x),u_2(x)\}\frac{dx}{x^2}+2-\int_0^1\min\{\varphi(\theta),\hat\varphi(\theta)\}\,d\psi(1+\theta).
\end{equation}
All the functions involved are step functions with rational jumps; $K$ is a finite sum of
values of $1/x$ and of $\psi$ at rational points.

\begin{lemma}\label{lem:den}
Let $\den(\xi)$ denote the denominator of $\xi\in\Q$. Then
$\limsup_n\frac1n\log\den(p_n+\hat p_n)\le K$.
\end{lemma}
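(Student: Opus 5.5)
The plan is to bound $\ord_p(p_n+\hat p_n)$ from below prime by prime, using $\ord_p(p_n+\hat p_n)\ge\min\{\ord_p p_n,\ord_p\hat p_n\}$. I then add up the resulting exponents with the prime number theorem. Write $x=n/p$ throughout. I split the primes into three ranges: $p\le\sqrt{cn}$ with $c=\max\{\gamma_0,\hat c_1\}$, then $\sqrt{cn}<p<n$, then $n\le p\le\max\{\gamma_1,\hat c_1\}n$. Primes above $\max\{\gamma_1,\hat c_1\}n$ do not divide any denominator.

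\emph{Small primes.} For $p\le\sqrt{cn}$ I use only $D_{\gamma_1n}D_{\gamma_2n}p_n\in\Z$ and $D_{\hat c_1n}D_{\hat c_2n}\hat p_n\in\Z$. Each of these gives $-\ord_p\le 2\log(cn)/\log p$. Summing $\log p$ times this over $p\le\sqrt{cn}$ gives $O(\sqrt n\log n)=o(n)$.

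\emph{Medium primes} $\sqrt{cn}<p<n$. Here every $D_m$ involved contains $p$ exactly once, so each of $p_n$, $\hat p_n$ has denominator exponent at most $2$ before any savings. By \eqref{eq:lemma7} we gain $\varphi(n/p)$ on $p_n$. By \cite[Lemma~8]{Z14} we gain $\hat\varphi(n/p)$ on $\hat p_n$, through $\ord_p\hat p_n\ge-2+\hat\varphi(n/p)$. Hence $-\ord_p(p_n+\hat p_n)\le 2-\min\{\varphi,\hat\varphi\}(n/p)$. Both functions are $1$-periodic, so the standard estimate (as in \cite{Z14}) gives
\[
\sum_{\sqrt{cn}<p<n}\Bigl(2-\min\{\varphi,\hat\varphi\}(n/p)\Bigr)\log p=n\Bigl(2-\int_0^1\min\{\varphi(\theta),\hat\varphi(\theta)\}\,d\psi(1+\theta)\Bigr)+o(n).
\]
This accounts for the last two terms of \eqref{eq:K}.

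\emph{Large primes} $n\le p$, i.e.\ $x\in(0,1]$. For $p_n$: $\ord_p D_{\gamma n}=\one[x\ge1/\gamma]$. The saving $\varphi(x)$ from $\Phi_n$ applies for $p\le\gamma_2n$, i.e.\ $x\ge1/\gamma_2$. So $-\ord_pp_n\le u_1(x)$.

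For $\hat p_n$ I have two bounds. The first is $-\ord_p\hat p_n\le e_2(x)$, from the $D_{\hat c_jn}$. The second is $-\ord_p\hat p_n\le\upsilon(x)$, which I get term by term from \eqref{eq:phat}. Put $y=k/p$. I will check that $E_k(p)=E(x,y)$; this holds because the shifts $+1,+2$ in $\ha_j,\hb_j$ do not affect the integer parts for $p>$ the relevant bounds. The factor $\Lambda_s(2k-\ha_0^*)$ contains a multiple of $p$ iff $2k-\ha_0^*\ge p$, i.e.\ iff $\chi(x,y)=1$, and then only to the first power since $p\ge n$. The three infima in $m(x)$ then cover the three kinds of term:
\begin{itemize}
\item[(i)] for $k\in J_1$, the term $2A_k\Lambda_2$ has valuation at least $E-2\chi$, by \eqref{eq:24};
\item[(ii)] for $k\in J_2\setminus J_1$, the term $B_k\Lambda_1$ has valuation at least $E-1-\chi$, again by \eqref{eq:24};
\item[(iii)] for $k\in J_1$, the term $B_k\Lambda_1$ has valuation at least $E-\delta-\chi$, by Lemma~\ref{lem:refined}.
\end{itemize}
In case (iii), the hypothesis $p^2>\max|\mathcal N_k|$ holds since $p\ge n$ and the elements of $\mathcal N_k$ are $O(n)$. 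I also check that $\delta_k(p)\le\delta(x,y)$. The first two conditions in the definition of $\delta$ capture the elements $i-2k$, $i-k$ that vanish or are divisible by $p$ after scaling.

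If $A_k=0$, then $B_k$ is still covered, via \eqref{eq:24} and the bound $E-1-\chi\ge E-1-\chi$. I will have to argue that the infimum over $J_1$ of $E-\delta-\chi$ does not undercut this, or else use $\min$ with case (ii) on $J_1$ too. I expect this bookkeeping to be the main obstacle. It consists of the exact matching between the discrete $k$-ranges and $J_1,J_2$, the definition of $\ha_0^*$, and the $A_k=0$ case.

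Combining the two bounds gives $-\ord_p\hat p_n\le u_2(x)$, hence $-\ord_p(p_n+\hat p_n)\le\max\{u_1,u_2\}(n/p)$. All three of $u_1,u_2,\max\{u_1,u_2\}$ are bounded step functions on $(0,1]$ with finitely many rational jumps and vanish for small $x$. The prime number theorem then gives
\[
\sum_{p\ge n}\max\{u_1,u_2\}(n/p)\log p=n\int_0^1\max\{u_1,u_2\}\frac{dx}{x^2}+o(n).
\]
Adding the three ranges yields $\log\den(p_n+\hat p_n)\le(K+o(1))n$.
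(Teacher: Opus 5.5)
Your outline matches the paper's proof. It uses the same three ranges of primes and the inequality $\ord_p(p_n+\hat p_n)\ge\min\{\ord_pp_n,\ord_p\hat p_n\}$. For $\sqrt{cn}<p\le n$ it uses $-2+\min\{\varphi,\hat\varphi\}(n/p)$. For $p>n$ it uses $u_1$ for $p_n$ and $\min\{e_2,\upsilon\}$ for $\hat p_n$, the latter obtained term by term from \eqref{eq:phat}, \eqref{eq:24} and Lemma~\ref{lem:refined}. It ends with the prime number theorem. Two steps in the large-prime range, however, are wrong or left open.

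\textbf{The normalisation of $y$.} It must be $y=(k-1)/p$, as in the paper and \cite[Lemma~8]{Z14}, not $y=k/p$. Your claim that the shifts $+1,+2$ ``do not affect the integer parts'' is false. For instance $\fl{(k-\ha_1)/p}=\fl{(k-1-\hal_1n)/p}$ differs from $\fl{k/p-\hal_1x}$ whenever $p\mid k-\hal_1n$. This happens for some $k$ in the summation range at a positive proportion of the primes $p>n$, so the identity $E_k(p)=E(x,k/p)$ is not available.

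With $y=(k-1)/p$ the matching is exact for every $p$. First, $E_k(p)=E(x,y)$ term by term. Second, since $\ha_0^*=\hal_0^*n+2$, one has $p\le 2k-\ha_0^*$ iff $\chi(x,y)=1$. What remains are boundary effects: the $k$-ranges map into the closures of $J_1,J_2$, and the last two intervals in $\delta$ are half-open although $i$ runs up to $\hb_j-1$. Comparing with points slightly to the left in $y$ shows these can spoil $\ord_p\hat p_n\ge m(x)$ only for finitely many values of $x$. That is $O(1)$ primes, which is the $o(n)$ exceptional set in the paper.

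\textbf{The case $A_k=0$.} You leave this case open. It needs no new argument, and it must not be handled by adding $\inf_{J_1}(E-1-\chi)$ to $m(x)$: that would change $K$, so you would be proving a weaker statement than the lemma. At $y=(k-1)/p$ the first two clauses of $\delta$ are $\hbe_0x\le2y<\hal_0x$ and $\hbe_1x\le y<\hal_1x$. These are exactly $\hb_0\le2k<\ha_0$ and $\hb_1\le k<\ha_1$, the conditions under which a numerator factor of $\hat R$ vanishes at $t=-k$, that is, $A_k=0$. They detect vanishing, not divisibility by $p$ of elements of $\mathcal N_k$ (which are nonzero by definition).

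Hence $A_k=0$ forces $\delta(x,y)=1$. The fallback bound $\ord_p(B_k\Lambda_1)\ge E_k(p)-1-\chi$ from \eqref{eq:24} is then precisely $E-\delta-\chi$ at that $y$, so it is already $\ge m(x)$.

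With these two corrections your argument coincides with the paper's.
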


\begin{proof}
Let $c\ge\max\{\gamma_1,\hat c_1\}$ be a constant, depending only on the parameters, such that for $p>\sqrt{cn}$ every integer $L$ occurring as an argument of $\Lambda_s$ in \eqref{eq:phat} and every element of every set $\mathcal N_k$ of Lemma~\ref{lem:refined} is smaller than $p^2$ in absolute value. Since $D_{\gamma_1n}D_{\gamma_2n}p_n$ and
$D_{\hat c_1n}D_{\hat c_2n}\hat p_n$ are integers, the primes $p\le\sqrt{cn}$ contribute
$O(\sqrt n\log n)$ to $\log\den(p_n+\hat p_n)$. Let $p>\sqrt{cn}$ and $x=n/p$; then
$\ord_pD_{\gamma n}=\one[p\le\gamma n]$ and
$\ord_p(p_n+\hat p_n)\ge\min\{\ord_pp_n,\ord_p\hat p_n\}$.

\emph{Primes $p>n$.} By \eqref{eq:lemma7}, $\ord_pp_n\ge-u_1(x)$. By \cite[(25)]{Z14},
$\ord_p\hat p_n\ge-e_2(x)$. In \eqref{eq:phat} we bound the terms one by one: $\ord_pA_k\ge
E_k(p)$ and, by \eqref{eq:24} and Lemma~\ref{lem:refined}, $\ord_pB_k\ge E_k(p)-\delta_k(p)$
when $\ha_3^*\le k<\hb_2^*$ and $A_k\ne0$, $\ord_pB_k\ge E_k(p)-1$ otherwise; moreover
$\ord_p\Lambda_2(L)\ge-2\cdot\one[p\le L]$ and $\ord_p\Lambda_1(L)\ge-\one[p\le L]$ since $p^2>L$.
Setting $y=(k-1)/p$ as in the proof of \cite[Lemma~8]{Z14}, these bounds become
$E-2\chi$, $E-1-\chi$ and $E-\delta-\chi$ at $(x,y)$ (the $A$-terms live on $J_1$, the
$B$-terms on $J_2$), up to a set of primes on which $x$ lies within $O(1/p)$ of one of finitely
many rational points; these contribute $o(n)$. Hence $\ord_p\hat p_n\ge-\min\{e_2(x),\upsilon(x)\}=-u_2(x)$
outside that set, and $\ord_p(p_n+\hat p_n)\ge-\max\{u_1,u_2\}(x)$.

\emph{Primes $\sqrt{cn}<p\le n$.} Here $e_1=e_2=2$ and, by \eqref{eq:lemma7} and
\cite[Lemma~8]{Z14}, $\ord_p(p_n+\hat p_n)\ge-2+\min\{\varphi,\hat\varphi\}(n/p)$.

By the prime number theorem, for a step function $w$ vanishing near $0$,
$\sum_{p>n}w(n/p)\log p=n\int_0^1w(x)\,dx/x^2+o(n)$ and, for $1$-periodic $w$,
$\sum_{\sqrt{cn}<p\le n}w(n/p)\log p=n\int_0^1w(\theta)\,d\psi(1+\theta)+o(n)$, since
$\psi'(1+\theta)=\sum_{k\ge1}(k+\theta)^{-2}$ (this is the standard computation behind \cite[Lemma~7]{Z14}). With $\psi(2)-\psi(1)=1$ this
gives the bound $K$.
\end{proof}

Let
\[
\begin{aligned}
\Sigma=\{&(+1,2,\hal_0),(+1,1,\hal_1),(+1,1,\hal_2),(+1,1,\hal_3),\\
&(-1,1,\hbe_2),(-1,1,\hbe_3),(+1,-2,-\hbe_0),(+1,-1,-\hbe_1)\},
\end{aligned}
\]
and for $\sigma_0\in\R$ and $y\in\R$ let
\[
\hat H(y)=\sum_{(\epsilon,k,c)\in\Sigma}\epsilon\operatorname{Re}\bigl(w\log w\bigr)\big|_{w=k(\sigma_0+iy)+c}
+\pi|y|+h_\Pi,
\]
\[
h_\Pi=(\hbe_2-\hal_2)\log(\hbe_2-\hal_2)+(\hbe_3-\hal_3)\log(\hbe_3-\hal_3)-(\hal_0-\hbe_0)\log(\hal_0-\hbe_0)-(\hal_1-\hbe_1)\log(\hal_1-\hbe_1).
\]

\begin{lemma}\label{lem:decay}
For $\sigma_0=-32.2134$ one has $\limsup_n\frac1n\log|\hat r_n|\le\sup_{y\in\R}\hat H(y)\le-299.1530<-K-1$.
\end{lemma}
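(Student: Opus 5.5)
The plan is to estimate the Barnes integral defining $\hat r_n$ directly, after moving the contour to a line where the integrand is uniformly small. Since $\hat R(t)$ is rational, the integrand $\frac{\pi}{\sin2\pi t}\hat R(t)$ is meromorphic with poles only at half-integers and at the poles of $\hat R$. We first move the vertical line $\operatorname{Re}t=C/2$ to $\operatorname{Re}t=\sigma_0n$ with $\sigma_0=-32.2134$. This value lies in the admissible strip: $-\ha_0^*/(2n)\to-\hal_0^*/2=-\min\{223,154\}/2=-77$ and $(1-\hb_0)/(2n)\to-15.5$. Since $C$ is only constrained to an open interval free of poles, we may take $C=2\sigma_0n$ (shifted by at most $1/2$ to avoid half-integers), and no residues are crossed.

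Next we write each Pochhammer factor as a ratio of Gamma functions:
\[
(2t+\hb_0)_{\ha_0-\hb_0}=\frac{\Gamma(2t+\ha_0)}{\Gamma(2t+\hb_0)},\qquad
\frac{1}{(t+\ha_j)_{\hb_j-\ha_j}}=\frac{\Gamma(t+\ha_j)}{\Gamma(t+\hb_j)},
\]
with the factorials left as they are. On the line, set $t=n(\sigma_0+iy)$ and apply Stirling's formula uniformly to each factor:
\[
\log|\Gamma(nw+O(1))|=n\operatorname{Re}(w\log w)-n\operatorname{Re}w+n\log n\operatorname{Re}w+O(\log(n(1+|w|))).
\]
The set $\Sigma$ is exactly the bookkeeping of these factors. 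The entries with $k=2,1$ and $c=\hal_j$ or $\hbe_j$ come from $\Gamma(kt+\text{const})$. The factors $\Gamma(2t+\hb_0)$ and $\Gamma(t+\hb_1)$ sit in the denominator with $\operatorname{Re}$ of their argument negative, so we apply the reflection formula $\Gamma(z)\Gamma(1-z)=\pi/\sin\pi z$. This turns them into numerator Gammas of $-2t-\hb_0$ and $-t-\hb_1$, which explains the entries $(+1,-2,-\hbe_0)$ and $(+1,-1,-\hbe_1)$. The resulting sine factors, together with $1/\sin2\pi t$, contribute $e^{\pi n|y|}$ in total; this is where the term $\pi|y|$ comes from.

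The linear terms $-n\operatorname{Re}w$ and the $n\log n$ terms cancel, because $\sum\hal_j=\sum\hbe_j$ and the weights $k$ are balanced. The factorials give $e^{nh_\Pi+o(n)}$ by Stirling. Hence the modulus of the integrand is at most $e^{n\hat H(y)+O(\log n)}$, uniformly in $y$. For large $|y|$ the integrand decays at least like $|t|^{-2}$, which settles convergence and the tail. Integrating over $y$ costs only a factor $n$, so we obtain
\[
\limsup_n\tfrac1n\log|\hat r_n|\le\sup_y\hat H(y).
\]

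The second inequality, $\sup_y\hat H(y)\le-299.1530$, is a certified numerical computation. $\hat H$ is even in $y$, so only $y\ge0$ matters. As $y\to\infty$ one has $\hat H(y)\to-\infty$: the leading terms $\sum\epsilon k\,y\arg(\cdot)$ tend to $-\pi y$ and cancel the term $+\pi|y|$ only to first order, and the remainder tends to $-\infty$. This asymptotic is checked explicitly to get a cutoff $Y$ beyond which $\hat H<-300$. On $[0,Y]$ we bound $\hat H$ in ball arithmetic on a subdivision, using an interval enclosure of $\hat H'$ (elementary: sums of $\epsilon k\operatorname{Re}(i\log w+i)$ terms) to bound the variation on each cell. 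The final comparison with $-K-1$ uses the value of $K$ from \eqref{eq:K}, itself certified as a finite combination of $1/x$ and $\psi$ at rational points.

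I expect the main obstacle to be the uniformity of the Stirling estimate along the whole line. In particular the reflection step must be done carefully so that the branches of $\log$ are consistent with $\operatorname{Re}(w\log w)$ on each side. One must also make sure that the shifted line stays at distance $\gg1$ from all poles, so that the $\sin$ factors are genuinely $\asymp e^{\pi n|y|}$ with no small-denominator loss near $y=0$.
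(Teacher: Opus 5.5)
Your proposal follows the paper's proof: you shift the contour to a line $\operatorname{Re}t\approx\sigma_0n$ inside the pole-free strip, reflect the two denominator factors $\Gamma(2t+\hb_0)$ and $\Gamma(t+\hb_1)$, which have negative real part, apply Stirling to the eight Gamma factors and the four factorials (the $n\log n$ and $n$ terms cancel because $\sum\hal_j=\sum\hbe_j$), and certify $\sup\hat H$ in ball arithmetic on a bounded range, with an explicit bound for the tail. One slip needs fixing: the leading terms $-\sum_\Sigma\epsilon\,ky\arg w$ grow like $-\frac{\pi y}{2}\sum_\Sigma\epsilon|k|=-3\pi y$, not $-\pi y$, so $\hat H(y)=-2\pi|y|+(\hal_0-\hbe_0)\log2+h_\Pi+O(y^{-2})$. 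If $\pi|y|$ were cancelled as you describe, $\hat H$ would tend to the constant $(\hal_0-\hbe_0)\log2+h_\Pi\approx92$ rather than to $-\infty$. It is this linear decay, not the fixed-$n$ decay $|t|^{-2}$ of $\hat R$, that controls the tail of the integral uniformly in $n$.
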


\begin{proof}
The interval $(-\hal_0^*/2,-\hbe_0/2)=(-77,-15.5)$ contains $\sigma_0$, so for large $n$ there is
$c_n\in\frac12\Z+\frac14$ with $|c_n-n\sigma_0|\le\frac14$ inside the strip
$-\ha_0^*/2<\operatorname{Re}t<(1-\hb_0)/2$, in which the integrand of $\hat r_n$ is holomorphic
(the poles of $\pi/\sin2\pi t$ there are cancelled by zeros of $\hat R$, \cite[\S6]{Z14}); we
move the contour to $\operatorname{Re}t=c_n$. On it, $|\pi/\sin2\pi t|=\pi/\cosh(2\pi\operatorname{Im}t)$.
On this line $\operatorname{Re}(2t+\hb_0)$ and $\operatorname{Re}(t+\hb_1)$ are negative; we use
$1/\Gamma(z)=\Gamma(1-z)\sin(\pi z)/\pi$ and $|\sin\pi z|\le e^{\pi|\operatorname{Im}z|}$ for
these two factors. The eight Gamma factors then have arguments $nw+O(1)$, $w=k\tau+c$,
$(\epsilon,k,c)\in\Sigma$, $\tau=\sigma_0+iy$, $y=\operatorname{Im}t/n$, all with
$\operatorname{Re}w\ge21.78$. Stirling's formula, applied to them and to the four factorials of
the normalising constant, gives
$\log|\hat R(t)\pi/\sin2\pi t|\le n\hat H(y)+O(\log(n+|\operatorname{Im}t|))$ uniformly on the
line: the terms in $n\log n$ and in $n$ cancel because $\sum\hal_j=\sum\hbe_j$, and the
exponential factors give $e^{\pi n(2|y|+|y|-2|y|)}=e^{\pi n|y|}$. For $|y|$ large,
$\hat H(y)=-2\pi|y|+(\hal_0-\hbe_0)\log2+h_\Pi+O(y^{-2})$, so the integral over the line is
$\le e^{n\sup\hat H+O(\log n)}$. The numerical bound is certified (file \texttt{verify\_\allowbreak zeta2.py}):
$\hat H$ is even; on $[0,2000]$ it is enclosed by $47$ Arb balls, the largest upper bound being
$-299.15302$; for $y\ge2000$,
$\hat H(y)\le-2\pi y+(\hal_0-\hbe_0)\log2+h_\Pi+\sum_{\Sigma}\frac{(\operatorname{Re}w)^3}{2k^2y^2}<-12472$.
\end{proof}

\begin{remark}
At the non-real critical point $\hat\tau_0$ of the second construction (a root of
$(2\tau+\hal_0)^2\prod_{j=1}^3(\tau+\hal_j)-(2\tau+\hbe_0)^2\prod_{j=1}^3(\tau+\hbe_j)$,
$\hat\tau_0=-32.21344848\ldots\pm14.37727696\ldots i$), taking $\sigma_0=\operatorname{Re}\hat\tau_0$ one finds $\hat H(\operatorname{Im}\hat\tau_0)=-325.4478457713\ldots=-C_0$ to all computed
digits, as Theorem~\ref{thm:coinc} predicts.
\end{remark}

\section{Coincidence of the two families}

\begin{theorem}\label{thm:coinc}
With the parameters \eqref{eq:param}--\eqref{eq:hatparam}, $r_n+\hat r_n=0$ for all
sufficiently large $n$; that is, $q_n=-\hat q_n$ and $p_n=-\hat p_n$.
\end{theorem}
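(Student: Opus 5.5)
The plan follows the outline given in the introduction: write the difference of the two linear forms as a rational number, then compare its denominator with its size. Put $\Delta_n=r_n+\hat r_n$. By Lemma~\ref{lem:whipple}, $q_n+\hat q_n=0$ for every $n$. Hence
\[
\Delta_n=(q_n+\hat q_n)\zeta(2)-(p_n+\hat p_n)=-(p_n+\hat p_n)\in\Q .
\]
It therefore suffices to show that $p_n+\hat p_n=0$ for all large $n$. The identity $r_n+\hat r_n=0$ then follows at once, together with $q_n=-\hat q_n$ and $p_n=-\hat p_n$.

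\emph{Arithmetic side.} By Lemma~\ref{lem:den}, for every $\varepsilon>0$ and all sufficiently large $n$ we have $\den(p_n+\hat p_n)\le e^{(K+\varepsilon)n}$. Consequently, if $p_n+\hat p_n\ne0$, then
\[
|\Delta_n|=|p_n+\hat p_n|\ge \den(p_n+\hat p_n)^{-1}\ge e^{-(K+\varepsilon)n},
\]
because a nonzero rational number has absolute value at least the reciprocal of its denominator.

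\emph{Analytic side.} The triangle inequality gives $|\Delta_n|\le|r_n|+|\hat r_n|$.
\begin{itemize}
\item By \cite[Lemma~6]{Z14}, $\limsup\frac1n\log|r_n|=-C_0$. From Table~\ref{tab:cert} one has $C_0\approx325.45$.
\item By Lemma~\ref{lem:decay}, $\limsup\frac1n\log|\hat r_n|\le-299.1530$.
\end{itemize}
So $\limsup\frac1n\log|\Delta_n|\le-\min\{C_0,299.1530\}=-299.1530$. Lemma~\ref{lem:decay} already states $-299.1530<-K-1$. The one extra check is that $C_0\ge299.1530$ as well, which holds because $C_0>325$ (the Remark above records the same value). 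Choose $\varepsilon=\tfrac12$. For large $n$ this gives $|\Delta_n|\le e^{(-K-1+\varepsilon)n}<e^{-(K+\varepsilon)n}$. This contradicts the lower bound from the arithmetic side unless $p_n+\hat p_n=0$. Hence $\Delta_n=0$ for all sufficiently large $n$.

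The main obstacle is not this final comparison, which is immediate. The real work lies in the inputs: the certified inequality $\sup\hat H<-K-1$, which needs the refined denominator estimate of Lemma~\ref{lem:den} to make $K$ small enough, and the Whipple identity with the correct sign. Both are taken as given from the earlier lemmas. In writing the argument I would state explicitly the numerical values of $K$ and $C_0$ from Table~\ref{tab:cert}, so that the strict inequality with a margin (here at least $1-\varepsilon$ in the exponent rate) is visibly certified in ball arithmetic.
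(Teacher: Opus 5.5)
Your argument is correct and is the paper's proof. Lemma~\ref{lem:whipple} makes $\Delta_n=-(p_n+\hat p_n)$ rational, and the denominator bound $e^{(K+o(1))n}$ of Lemma~\ref{lem:den} is beaten by the size bound $e^{(-\min\{C_0,K+1\}+o(1))n}$ from \cite[Lemma~6]{Z14} and Lemma~\ref{lem:decay}.

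There is one slip. With $\varepsilon=\tfrac12$ the exponents $-K-1+\varepsilon$ and $-(K+\varepsilon)$ coincide, so your final ``strict'' inequality is an equality and the contradiction is not reached as written. Take any $\varepsilon<\tfrac12$, or use that the $\limsup$ is already strictly below $-K-1$, which gives $|\Delta_n|<e^{-(K+1)n}$ for large $n$.
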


\begin{proof}
By Lemma~\ref{lem:whipple}, $\Delta_n:=r_n+\hat r_n=-(p_n+\hat p_n)\in\Q$. By
\cite[Lemma~6]{Z14} and Lemma~\ref{lem:decay},
$|\Delta_n|\le|r_n|+|\hat r_n|\le e^{(-\min\{C_0,K+1\}+o(1))n}$, and by Lemma~\ref{lem:den},
$\den(\Delta_n)\le e^{(K+o(1))n}$. Since $C_0-K=31.4070\ldots>0$ (Table~\ref{tab:cert}),
$|\den(\Delta_n)\Delta_n|<1$ for large $n$; this is an integer, so $\Delta_n=0$.
\end{proof}

For $0\le n\le2$ the identity $r_n+\hat r_n=0$ is also checked in exact rational arithmetic
(\texttt{verify\_\allowbreak zeta2\_\allowbreak exact.py}).

\section{Proof of Theorem~\ref{thm:main}}

Let $n$ be large and $c'=\max\{\gamma_0,\hat c_1\}$. By Theorem~\ref{thm:coinc} and Lemma~\ref{lem:whipple}, for the primes
$\sqrt{c'n}<p\le\gamma_2n$ we have, from \eqref{eq:lemma7} and \cite[Lemma~8]{Z14},
$\ord_pq_n\ge\tilde\varphi(n/p)$ and $\ord_pp_n\ge-2+\tilde\varphi(n/p)$, where
$\tilde\varphi=\max\{\varphi,\hat\varphi\}$. Hence, with
$\widetilde\Phi_n=\prod_{\sqrt{c'n}<p\le\gamma_2n}p^{\tilde\varphi(n/p)}$ (the primes $p\le\sqrt{c'n}$ do not affect the limits below),
\[
\widetilde\Phi_n^{-1}q_n\in\Z,\qquad\widetilde\Phi_n^{-1}D_{\gamma_1n}D_{\gamma_2n}\,p_n\in\Z,
\]
and, by the prime number theorem,
\[
\lim_n\frac1n\log\bigl(\widetilde\Phi_n^{-1}D_{\gamma_1n}D_{\gamma_2n}\bigr)
=\gamma_1+\gamma_2-\int_0^1\tilde\varphi(x)\,d\psi(x)+\int_0^{1/\gamma_2}\tilde\varphi(x)\frac{dx}{x^2}
=\widetilde C_2 .
\]
Together with $\limsup\frac1n\log|r_n|=-C_0$, $\lim\frac1n\log|q_n|=C_1$ and
$C_0>\widetilde C_2$, \cite[Lemma~2.1]{Ha} gives
$\mu(\zeta(2))\le(C_0+C_1)/(C_0-\widetilde C_2)$, which is the value in Table~\ref{tab:cert}.
\qed

\section{Certified values}\label{sec:cert}

The step functions $\varphi$, $\hat\varphi$, $\upsilon$ are computed in exact rational
arithmetic ($\upsilon$ in exact integer arithmetic after scaling), on the grid of all
$k/c$ with $c$ in an explicit finite set of denominators; each cell of the grid used for $K$ is
tested at three interior points, and none is found non-constant. The transcendental
quantities ($\tau_0$, $\tau_1$ via certified polynomial roots, logarithms, values of $\psi$ at
rational points, $\hat H$ on intervals) are computed with Arb \cite{Arb} at $220$ bits.

\begin{table}[h]
\centering
\begin{tabular}{@{}ll@{}}
\toprule
quantity & certified enclosure \\
\midrule
$C_0$ & $325.447845771316912\pm1.2\cdot10^{-16}$\\
$C_1$ & $467.694080399460208\pm2.3\cdot10^{-16}$\\
$\widetilde C_2$ & $167.431879146638171\pm2.9\cdot10^{-16}$\\
$\mu=(C_0+C_1)/(C_0-\widetilde C_2)$ & $5.0193783774690093995\pm5.4\cdot10^{-21}$\\
$K$ & $294.040748803464213\pm3.9\cdot10^{-16}$\\
$C_0-K$ & $31.407096967853\pm3.1\cdot10^{-13}$\\
$\sup_y\hat H(y)$, $\sigma_0=-32.2134$ & $\le-299.15302$\\
\bottomrule
\end{tabular}
\medskip
\caption{Certified constants for the parameters \eqref{eq:param}--\eqref{eq:hatparam}.}
\label{tab:cert}
\end{table}

\section{Remarks}\label{sec:rem}

\begin{remark}
For the smaller parameters $(19,16,13,22;\,0,3,6,38)$ the same proof gives
$\mu(\zeta(2))\le5.0204524708476832909\ldots$, with $C_0=42.33438299777\ldots$,
$K=38.31605710342\ldots$, $C_0-K=4.01832589434\ldots$ and $\sup\hat H\le-39.3547$ on
$\operatorname{Re}\tau=-4.1774$; this instance is quicker to verify and is also treated by the
ancillary files.
\end{remark}

\section*{Ancillary files}

\noindent\texttt{verify\_\allowbreak zeta2.py} (Python~3 with \texttt{python-flint} and \texttt{numpy})
recomputes and certifies every entry of Table~\ref{tab:cert}, and the corresponding values for the
smaller parameters; it starts with a self-test reproducing Zudilin's value $5.09541178\ldots$\
\texttt{verify\_\allowbreak zeta2\_\allowbreak exact.py} (standard library only) checks in exact rational arithmetic:
$q_n+\hat q_n=0$ for $n\le10$ (main parameters) and $n\le25$ (smaller ones);
$r_n+\hat r_n=0$, i.e.\ also $p_n+\hat p_n=0$, for $n\le2$ and $n\le6$; and the term-by-term
$p$-adic bounds used in Lemma~\ref{lem:den} against the true valuations ($11\,715$ and
$10\,820$ comparisons, no exception).

\section*{Acknowledgements}
The author thanks W.~Zudilin, whose constructions and lemmas this note rests on. Technical
assistance: Claude (Anthropic).


\begin{thebibliography}{99}
\bibitem{Ap} R.~Ap\'ery, Irrationalit\'e de $\zeta(2)$ et $\zeta(3)$,
\emph{Ast\'erisque} \textbf{61} (1979), 11--13.
\bibitem{Ha} M.~Hata, Rational approximations to $\pi$ and some other numbers,
\emph{Acta Arith.} \textbf{63} (1993), 335--349.
\bibitem{Arb} F.~Johansson, Arb: efficient arbitrary-precision midpoint-radius interval
arithmetic, \emph{IEEE Trans. Comput.} \textbf{66} (2017), 1281--1292.
\bibitem{Kl} J.~Kleid, $\mu(\zeta(2))\le5.0495243$, machine-checked in Lean~4, repository
\texttt{qedbook/zeta2-irrationality-measure}, GitHub, September 2026.
\bibitem{RV} G.~Rhin and C.~Viola, On a permutation group related to $\zeta(2)$,
\emph{Acta Arith.} \textbf{77} (1996), 23--56.
\bibitem{Sl} L.~J.~Slater, \emph{Generalized hypergeometric functions}, 2nd ed., Cambridge
University Press, 1966.
\bibitem{Z14} W.~Zudilin, Two hypergeometric tales and a new irrationality measure of
$\zeta(2)$, \emph{Ann. Math. Qu\'e.} \textbf{38} (2014), 101--117; arXiv:1310.1526.
\bibitem{Z17} W.~Zudilin, Hypergeometric heritage of W.~N.~Bailey, with an appendix: Bailey's
letters to F.~Dyson, arXiv:1611.08806.
\end{thebibliography}
\end{document}